\documentclass[10pt, a4paper, journal, twocolumn]{IEEEtran}

\IEEEoverridecommandlockouts 

\usepackage{amsmath}
\usepackage{amsthm} 
\usepackage[hidelinks,colorlinks=false]{hyperref}
\usepackage{graphicx}
\usepackage{amssymb}
\usepackage{bbm}
\usepackage{easylist}
\usepackage{fp}
\usepackage{siunitx}
\usepackage{dsfont}
\usepackage{graphicx}  
\usepackage{tikz}
\usepackage[lofdepth,lotdepth]{subfig}
\usetikzlibrary{shapes,arrows}
\usepackage{tkz-berge}
\usetikzlibrary{calc}
\usetikzlibrary{fit}
\usepackage{verbatim}
\usepackage{setspace}

\tikzstyle{block} = [draw, fill=white, rectangle, 
    minimum height=3em, minimum width=6em]
\tikzstyle{sum} = [draw, fill=white, circle, node distance=1cm]
\tikzstyle{input} = [coordinate]
\tikzstyle{output} = [coordinate]
\tikzstyle{pinstyle} = [pin edge={to-,thin,black}]

\addtolength{\headheight}{14pt}







\newcommand{\bal}[1] {\ensuremath{\left(\begin{array}{#1}}}
\newcommand{\ear} {\ensuremath{\end{array}\right)}}

\newcommand{\bals}[1] {\ensuremath{\left[\begin{array}{#1}}} 
\newcommand{\ears} {\ensuremath{\end{array} \right] }} 

\DeclareMathOperator{\im}{im}

\DeclareMathOperator{\inte}{int}

\DeclareMathOperator{\diag}{diag}

\newcommand{\one} {\ensuremath{\mathds{1} }} 



\let\leq\leqslant
\let\geq\geqslant

\newcommand{\calD}{\ensuremath{\mathcal{D}}}
\newcommand{\calE}{\ensuremath{\mathcal{E}}}

\newcommand{\calG}{\ensuremath{\mathcal{G}}}
\newcommand{\calH}{\ensuremath{\mathcal{H}}}

\newcommand{\calL}{\ensuremath{\mathcal{L}}}

\newcommand{\calS}{\ensuremath{\mathcal{S}}}

\newcommand{\calU}{\ensuremath{\mathcal{U}}}
\newcommand{\calV}{\ensuremath{\mathcal{V}}}
\newcommand{\calW}{\ensuremath{\mathcal{W}}}
\newcommand{\calX}{\ensuremath{\mathcal{X}}}


\newcommand{\bmat}{\begin{matrix}}
\newcommand{\emat}{\end{matrix}}
\newcommand{\bbm}{\begin{bmatrix}}
\newcommand{\ebm}{\end{bmatrix}}
\newcommand{\bpm}{\begin{pmatrix}}
\newcommand{\epm}{\end{pmatrix}}
\newcommand{\bse}{\begin{subequations}}
\newcommand{\ese}{\end{subequations}}
\newcommand{\beq}{\begin{equation}}
\newcommand{\eeq}{\end{equation}}
\newcommand{\ben}{\begin{enumerate}}
\newcommand{\een}{\end{enumerate}}
\newcommand{\beni}{\renewcommand{\labelenumi}{\roman{enumi}.}
\renewcommand{\theenumi}{\roman{enumi}}\begin{enumerate}}
\newcommand{\eeni}{\end{enumerate}\renewcommand{\labelenumi}{\arabic{enumi}.}
\renewcommand{\theenumi}{\arabic{enumi}}}
\newcommand{\bena}{\renewcommand{\labelenumi}{\alpha{enumi}.}
\renewcommand{\theenumi}{\alpha{enumi}}\begin{enumerate}}
\newcommand{\eena}{\end{enumerate}\renewcommand{\labelenumi}{\arabic{enumi}.}
\renewcommand{\theenumi}{\arabic{enumi}}}
\newcommand{\bit}{\begin{itemize}}
\newcommand{\eit}{\end{itemize}}


\newcommand{\R}{\ensuremath{\mathbb R}}

%
%


%

\tikzstyle{vertex}=[circle,fill=black!5,draw=black,minimum size=8mm]

\tikzstyle{terminal vertex} = [circle,fill=black!5,draw=black,minimum size=8mm]
\tikzstyle{edge} = [draw,thick,-]
\tikzstyle{edge2} = [draw,thick,->,red!50]
\tikzstyle{weight} = [font=\small]
\tikzstyle{selected edge} = [draw,line width=2pt,-,red!50]
\tikzstyle{ignored edge} = [draw,line width=5pt,-,black!20]
\tikzstyle{empty vertices} = [circle,fill=white!5]

\newtheorem{theorem}{Theorem}

\newtheorem{lemma}[theorem]{Lemma}

\newtheorem{corollary}[theorem]{Corollary}

\newtheorem{example}{Example}
\newtheorem{remark}{Remark}

\title{Optimal Weight Allocation of Dynamic Distribution Networks and Positive Semi-definiteness of Signed Laplacians}

\author{Jieqiang Wei, Alexander Johansson, Henrik Sandberg, Karl H. Johansson and Jie Chen
\thanks{*This work is supported by Knut and Alice Wallenberg Foundation, Swedish Research Council, and Swedish Foundation for Strategic Research.}
\thanks{J. Wei, A. Johansson, H. Sandberg, K. H. Johansson are with the Department of Automatic control, School of Electrical Engineering and Computer Science, 
 KTH Royal Institute of Technology,
 SE-100 44 Stockholm, Sweden.
 {\tt\small \{jieqiang, alexjoha, hsan, kallej\}@kth.se}.
Jie Chen is with the Department of Electronic Engineering, City University of Hong Kong,
Hong Kong, China.
 {\tt\small \{jichen\}@cityu.edu.hk}
}}

\begin{document}

\maketitle






\begin{abstract}
In this paper, we consider the robustness of a basic model of a dynamical distribution network. In the first problem, i.e., optimal weight allocation, we minimize the $\calH_\infty \text{- norm}$ of the dynamical distribution network subject to allocation of the weights on the edges. It is shown that this optimization problem can be formulated as a semi-definite program. Next we consider the semi-definiteness of the weighted graph Laplacian matrix with negative weights on the edges. A necessary and sufficient condition, using the effective resistance matrix, is established to guarantee the positive semi-definiteness of the Laplacian matrix. Furthermore, the bounded real lemma is derived for state-space symmetric systems.
\end{abstract}

\begin{IEEEkeywords}
Network Analysis and Control, $\calH_\infty$ control, optimization, signed Laplacian.             
\end{IEEEkeywords}

\IEEEpeerreviewmaketitle

\section{Introduction}

Modern societies critically rely on distribution networks of various kinds.
Typically, a distribution network is depicted as a graph where resources can enter the network via supply vertices and leave the network via demand vertices, together with edges that connect the supply, demand and additional internal vertices. Often, flow capacity constraints and cost functions are assigned to the edges.

Distribution networks can be divided into two classes, depending on whether the vertices can store resources or not. If the vertices can only distribute resources without storage, we refer to this type of distribution networks as static. The study of static distribution networks is a broad research topic which has a long history and a large number of applications \cite{Aronson1989}. One celebrated result is the max-flow min-cut theorem \cite{Ford1956}. The static distribution problem is closely related to monotropic programming problems which enjoy a complete and symmetric duality theory \cite{rockafellar1984network}. 

Differently from static distribution networks, in dynamical distribution networks vertice can have storage of resources. This type of models has many applications in, e.g., communication networks \cite{Ephremides1989,Segall1977}, transportation networks \cite{Moreno1995,Como2015,Lovisari2014,Moss1982}, hydraulic networks \cite{SCHOLTEN2017}, flow networks \cite{DANIELSON2013,wei2013}, and inventory and production systems \cite{Bertsimas2006,Blanchini00}. 

In this paper, we analyze the robustness of a basic dynamical distribution networks where we assign a set of single integrators to the vertices (with state variables corresponding to storage). All the integrators are controlled by the flows on the edges. On each edge, the flow is the weighted storage difference of the adjacent vertices.  Furthermore, unknown in/outflows may enter or leave the network through some of the vertices. The aim here is to minimize the induced $\calL_2$ gain from the in/outflows to the output of the network by allocating the weights on the edges, which will be called optimal weight allocation problem in this paper. The results of this problem can be relevant when designing robust multi-agent systems. Especially, our setup is similar to the setting in \cite{rai2012}, when one considers the in/outflows as malicious attacks whose goal is to maximize the differences of the storages of the vertices. Then by solving the optimal weight allocation problem, the effect of the worst attack will be minimized. The distribution networks considered in this paper can be seen as linear time-invariant port-Hamiltonian systems \cite{schaftSIAM}, but also resides in the category of state-space symmetric systems \cite{Nagashio2005,Qiu1996,WILLEMS1976,Yang2001}. One useful property of the state-space symmetric system is that its $\calH_\infty \text{- norm}$ is attained at the zero frequency \cite{TAN2001}.

One closely related problem to the optimal weight allocation, where the connection will be clear in the primary part of the paper, is the positive semi-definiteness of weighted Laplacian with both negative and positive weights. This problem is of salient importance in distributed algorithms \cite{Altafini2013,Xia2016,Xiao2003}.  This problem was considered by many authors. In \cite{Zelazo2014}, the authors provided one sufficient and necessary condition, using effective resistance, for a special weighted graph, namely those where the negatively weighted edges are isolated in different cycles in the graphs spanned by the positive edges. Under the same assumption, the authors of \cite{ChenY2016} re-derived the result in \cite{Zelazo2014} by using geometrical and passivity-based approaches. For general weighted graphs, one sufficient and necessary condition was proposed in \cite{ChenW2016,ChenW2017} using pseudo-inverse of weighted (with negative ones) Laplacian. Here we propose a sufficient and necessary condition using the effective resistance matrix of the positive subgraph from $\calH_\infty$ approach.

The contributions of this paper are listed as follows. First, we derive a bounded real lemma type of result for state-space symmetric systems. Second, the problem of minimizing the $\calH_\infty \text{- norm}$ of the dynamical distribution networks subject to the allocation of the flow capacities is formulated as a semi-definite program. Third, we present a necessary and sufficient condition of positive semi-definiteness of weighted Laplacians, with negative and positive weights, i.e., signed Laplacians.

The structure of the paper is as follows. Some preliminaries will be given in Section \ref{s:preli}. The considered class of dynamic distribution networks and the corresponding weights allocation problem, and the problem of positive semi-definiteness of weighted Laplacian are formulated in Section \ref{ProbForm}. The main results are presented in Section \ref{distnet} and \ref{s:Positive-Laplacian}. Conclusions and future work are given in Section \ref{Concl}.

The notations used in the current paper are collected as follows.

\textbf{Notation.}  A positive definite (positive semi-definite) matrix $M$ is denoted as $M\succcurlyeq 0$ ($M\succ 0$). 
The element on the $i^{\text{th}}$ row and $j^{\text{th}}$ column of a matrix $M$ is denoted $M_{ij}$. The pseudo-inverse of $M$ is $M^\dagger$. Recall that, for any finite dimensional square matrix $M$, the induced $\ell_2$ norm, denoted by $\|M\|_2$, is the largest singular value which is denoted by $\bar{\sigma}(M)$. The image of a matrix $M$ is $\im M$. The identity matrix is denoted as $I$. The vectors $\one_n$ represents a $n$-dimensional column vector with each entry being $1$.
We will omit the subscript $n$ when no confusion arises. The Euclidean norm of a vector $x$ is denoted as $\|x \|_2$. Given a set $\calS$, $\inte\{S\}$ denotes its interior.

\section{Preliminaries}\label{s:preli}

In this section, we briefly review some essentials about graph theory \cite{Bollobas98} and robust analysis \cite{zhou1998essentials}.

\subsection{Graph Theory}\label{GraphT}

An undirected graph $\mathcal{G}=(\mathcal{W},\mathcal{V},\mathcal{E})$ consists of a finite set of vertices $\mathcal{V}=\{v_1,...,v_n\}$, a set of edges $\mathcal{E}=\{ \mathcal{E}_1,...,\mathcal{E}_m\}$ that contains unordered pairs of elements of $\calV$, and a set of corresponding edge weights $\mathcal{W}=\{w_1,...,w_m\}$. 
The set of neighbours to vertice $i$ is 
\begin{equation*}
N_i=\{v_j|(v_i,v_j)\in \mathcal{E} \}.
\end{equation*}
The graph Laplacian  $L\in \mathbb{R}^{n \times n}$ is defined component-wise as
\begin{equation*}\label{Laplacian}
 L_{w,ij} =
  \begin{cases}
    \sum_{ j\in N_i} w_{ij} & \quad \text{if }   i=j,  \\
    -w_{ij}  & \quad \text{if } j\in N_i \setminus \{i\}\\
    0 & \quad \text{if } j\notin N_i,
  \end{cases},
 \end{equation*}
where both positive and negative weights are allowed. Given an arbitrary orientation for each edge, the incidence matrix $B\in~\R^{n \times m}$ is defined as
\begin{equation*}\label{incid}
B_{ij}=\begin{cases}
    1 & \quad \text{if }   \mathcal{E}_j \ \text{starts in vertice} \ v_i,  \\
   -1  & \quad \text{if }   \mathcal{E}_j \ \text{ends in vertice} \ v_i,  \\
    0  & \quad \text{else}. 
  \end{cases}
\end{equation*}
These two matrices are related by $L_w=BWB^T$, where $W=~\diag(w_1,...,w_m)$. If $W \succcurlyeq 0$, i.e., there are only positive edges, then it is well-known that the eigenvalues of $L_w$ can be structured as
$
0=\lambda_1\leq\lambda_2\leq...\leq \lambda_n, 
$
where the eigenvector corresponding to $\lambda_1=~0$ is $\mathds{1}$. If $W = I$, the Laplacian is denoted without subscript as $L$.

If a graph $\calG = (\calV,\calE,\calW)$ has both positive and negative weights, we separate the edges set $\calE$ into  $\calE_+$ and $\calE_-$, which contains the positive and negative edges, respectively. Thus $\calE=\calE_-\cup \calE_+$. Accordingly, the weight matrix $W = W_+ - W_-$, where $W_+$ ($W_-$) is the absolute value of the weights corresponding the positive (negative) edges. The Laplacian matrix is referred to as signed Laplacian which can be decomposed as 
\begin{equation*}
L_w =L_{w+} - L_{w-} : = B_{+}W_{+}B_{+}^T - B_{-}W_{-}B_{-}^T ,
\end{equation*}
where $B_{+}$ and $B_{-}$ are incidence matrices corresponding to the positive and negative sub-graphs, respectively. 

The undirected and connected graph without self-loops and with only positive weights can be associated with electrical networks \cite{Klein93}. One important concept is the \emph{effective resistance matrix}, see e.g., \cite{bullo2014,Klein93}, which is defined as  
\begin{equation*}
\Gamma = B^TL_w^{\dagger}B,
\end{equation*}
where $L_w^{\dagger}$ is the Moore-Penrose pseudo-inverse of $L_w$ and $B$ is the incidence matrix.


\subsection{$\mathcal{L}_2$-Norm and induced $\mathcal{L}_2$-Gain}

In this subsection, we recall some definitions from robust control. The notations used in this paper are fairly standard and are consistent with \cite{zhou1998essentials}, \cite{RANTZER2015}.
The space of square-integrable signals $f:[0,\infty)\rightarrow \mathbb{R}^n$ is denoted by $L_2[0,\infty)$.
For the linear time-invariant system 
\begin{align}\label{e:linear-sys}
\dot{x} & = Ax+Bu, \\ \nonumber
y & = Cx+Du,
\end{align}
the transfer matrix is $\mathbb{G}(s)=C(sI-A)^{-1}B+D$, which has the impulse response
\begin{equation*}
g(t)~=\calL^{-1}\{\mathbb{G}(s)\}~=~Ce^{At}B\mathbf{1}_+(t)+D\delta(t),
\end{equation*}
where $\delta(t)$ is the unit impulse and $\mathbf{1}_+(t)$ is the unit step defined as 
\begin{align}\nonumber
\mathbf{1}_+(t) = \begin{cases}
1, t\geq 0,\\
0, t<0.
\end{cases}
\end{align}  
If $x(0)=0$, then we have
$
y(t) = \int_{0}^t g(t-\tau) u(\tau) d\tau.
$
Then the induced $\mathcal{L}_2 \text{- gain}$ is defined as
\begin{align}\nonumber
\|g\|_{2-ind} = \sup_{u\in L_2[0, \infty)}\frac{\|y\|_2}{\|u\|_2} = \sup_{u\in L_2[0, \infty)}\frac{\|g*u\|_2}{\|u\|_2},
\end{align}
where 
$\|u(t)\|_2 =\Big(\int_{0}^{\infty}|u(t)|_2^2 dt\Big)^{\frac{1}{2} }.$

This induced $\mathcal{L}_2 \text{- gain}$, i.e., $\|g\|_{2-ind}$ or $\|\mathbb{G}\|_{2-ind}$, is often called the $\calH_\infty \text{- norm}$, denoted as $\|\mathbb{G}\|_\infty$.  It is well-know that for stable systems we have that $\|\mathbb{G}\|_\infty= \sup_{\omega\in\mathbb{R}}\bar{\sigma}\{\mathbb{G}(j\omega)\}$,
where $\bar{\sigma}(A)$ denotes the largest singular value of the matrix $A$.


If the matrices in \eqref{e:linear-sys} satisfy $A=A^\top$, $D=D^\top$ and $C=B^\top$, the system is referred to as state-space symmetric system \cite{TAN2001}. Here, we present a bounded real lemma type of result with respect to state-space symmetric system. Notice that for internally positive LTI system, the bounded real lemma was established in \cite{Tanaka2011}.
\begin{lemma}\label{lm:bounded-real-ssss}
Consider any state-space symmetric system \eqref{e:linear-sys} with ${A \prec 0}$ and $D=0$. Then the following conditions are equivalent,
\begin{enumerate}
\item $\|\mathbb{G}\|_\infty \leq \gamma$,
\item the inequality 
\begin{align}\label{e:Hinf-Riccati}
PA + AP + BB^\top+ \frac{1}{\gamma^2}PBB^\top P \preccurlyeq 0
\end{align}
has a solution $P=\gamma I$.
\end{enumerate} 
\end{lemma}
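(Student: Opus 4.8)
The plan is to reduce both conditions to a single algebraic inequality between $BB^\top$ and $-A$, exploiting that a state-space symmetric system attains its $\calH_\infty$-norm at zero frequency. Throughout I would write $\mathbb{G}(s) = B^\top(sI-A)^{-1}B$ (using $C = B^\top$, $D = 0$) and set $M := -A$, which satisfies $M \succ 0$ and is invertible since $A \prec 0$.

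For condition 1, I would invoke the peak-at-zero-frequency property of state-space symmetric systems \cite{TAN2001}, so that $\|\mathbb{G}\|_\infty = \bar{\sigma}(\mathbb{G}(0))$. Evaluating at $s=0$ gives $\mathbb{G}(0) = -B^\top A^{-1}B = B^\top M^{-1}B$, which is symmetric and positive semi-definite. Hence $\bar{\sigma}(\mathbb{G}(0)) = \lambda_{\max}(B^\top M^{-1}B)$, and condition 1 is equivalent to $\lambda_{\max}(B^\top M^{-1}B) \leq \gamma$.

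For condition 2, I would simply substitute $P = \gamma I$ into \eqref{e:Hinf-Riccati}. Because $A$ is symmetric, $PA + AP = 2\gamma A$, and $\frac{1}{\gamma^2}PBB^\top P = BB^\top$; the left-hand side collapses to $2\gamma A + 2BB^\top$. Thus condition 2 holds if and only if $\gamma A + BB^\top \preccurlyeq 0$, i.e.\ $BB^\top \preccurlyeq \gamma M$.

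It then remains to verify $\lambda_{\max}(B^\top M^{-1}B) \leq \gamma \iff BB^\top \preccurlyeq \gamma M$. The key step is a congruence by $M^{-1/2}$: the inequality $BB^\top \preccurlyeq \gamma M$ is equivalent to $M^{-1/2}BB^\top M^{-1/2} \preccurlyeq \gamma I$, i.e.\ $\lambda_{\max}\big((M^{-1/2}B)(M^{-1/2}B)^\top\big) \leq \gamma$. Since $(M^{-1/2}B)(M^{-1/2}B)^\top$ and $(M^{-1/2}B)^\top(M^{-1/2}B) = B^\top M^{-1}B$ share the same nonzero eigenvalues, and both are positive semi-definite, their largest eigenvalues coincide, closing the chain of equivalences. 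I expect the only genuinely delicate point to be the justification that the supremum over frequencies is attained at $\omega = 0$; once that property is in hand the remainder is elementary linear algebra, so I would either cite \cite{TAN2001} directly or, if a self-contained argument is preferred, establish monotonicity of $\bar{\sigma}(\mathbb{G}(j\omega))$ in $\omega$ through the spectral decomposition of the symmetric matrix $A$.
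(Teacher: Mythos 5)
Your proof is correct and follows essentially the same route as the paper's: both hinge on the peak-at-zero-frequency property (Theorem 6 of the cited reference) to reduce $\|\mathbb{G}\|_\infty\leq\gamma$ to the static inequality $B^\top(-A)^{-1}B\preccurlyeq\gamma I$, and then observe that $P=\gamma I$ collapses \eqref{e:Hinf-Riccati} to $\gamma A+BB^\top\preccurlyeq 0$. The only cosmetic difference is that the paper links the two matrix inequalities via a Schur complement (and cites the standard bounded real lemma for the direction $2\Rightarrow1$), whereas you use a congruence by $(-A)^{-1/2}$ together with the equality of the nonzero spectra of $XX^\top$ and $X^\top X$, which gives both directions at once.
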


The proof is given in Appendix.

\begin{remark}
In the previous lemma, we gave an explicit solution for the Riccati inequality \eqref{e:Hinf-Riccati} where the bounded real lemma can only guarantee the existence of the solutions.
\end{remark}

\section{Problems Formulation}\label{ProbForm}

%
%
%

In this paper, we first consider the weight allocation problem in the scenario of dynamical distribution networks, which is defined on a graph $\calG=(\calV,\calE)$ with $|\calV|=n$ and $|\calE|=m$. Consider the dynamic model 
\begin{align}\label{e:plant}
\dot{x}(t) & = B u(t) + E d(t),
\end{align}
where $B$ is the incidence matrix of the graph $\calG$, $x\in\R^n$ is the system state whose components represent the storage levels in the vertices, $u\in\R^m$ is the controlled flows on the edges, $E\in\R^{n\times k}$ is an assigned matrix and $d(t)\in\R^k$ is an unknown external in/outflows. Here we assume that the image of $E$ is a subset of the image of $B$, i.e., the inflow is equal to the outflow. To simplify the composition, we further assume that, for all $i = 1,\ldots,k$, the $i$th column of $E$ consists of one element which is $\alpha_i>0$ (inflow) and one element $-\alpha_i$ (outflow), while the rest of the elements are zero. Without specification, we set $\alpha_i=1$.  A \emph{port} is a set of vertices (terminals) to where the external flows which enter and leave the network sum to zero. Thus, $E$ defines $k$ ports. One example of system \eqref{e:plant} is depicted as Fig. \ref{figure main ex1}. 

The condition $\im E\subset \im B$ is a standard assumption, in order to have a stable distribution network, for example in \cite{Blanchini00} which is recalled in the following remark.

\begin{remark}
In \cite{Blanchini00}, the authors considered a distribution network with constraints on the storages, flows and external in/outflows as
\begin{align*}
x(t) & \in \calX := \{ x\in\R^n \mid x^-\leq x \leq x^+ \} \\
u(t) & \in \calU := \{ u\in\R^m \mid u^-\leq u \leq u^+ \} \\
d(t) & \in \calD := \{ d\in\R^k \mid d^-\leq d \leq d^+ \} 
\end{align*}
where $x^+,x^-,u^+,u^-,d^+,d^-$ are assigned vectors and the inequalities hold component-wisely. First, it was proved that
the existences of a state-feedback control $u(t)\in\calU$ and a set of initial conditions $\calX_0\subset \calX$, such that for every $x(0)\in\calX_0$, the solutions of \eqref{e:plant} satisfy
\begin{align*}
x(t)\in\calX, \quad \forall d(t)\in\calD, \quad t\geq 0
\end{align*}
if and only if $E\calD\subset -B\calU$.
Then it was proved that for any $\bar{x}\in\calX$, the existence of a state feedback control law $u$, for system \eqref{e:plant}, such that $x(t)\in\calX$, $u(t)\in\calU$, and 
\begin{align*}
\lim_{t\rightarrow \infty} x(t) = \bar{x}, \quad \forall d(t)\in\calD
\end{align*}
if and only if 
\begin{align*}
E\calD \subset -\inte\{B\calU\}.
\end{align*}
It can be seen that one necessary condition to have $E\calD\subset -B\calU$ and $E\calD \subset -\inte\{B\calU\}$ is $\mathds{1}^\top E = 0$ which implies that the image of $E$ is a subset of the image of $B$ for connected graphs. 
\end{remark}

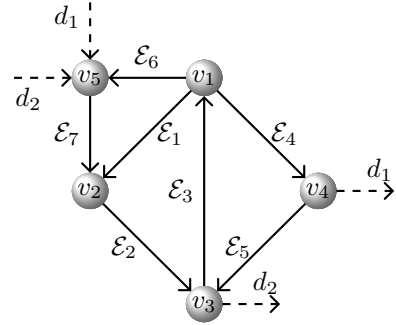
\begin{figure}[ht]
\begin{center}
\begin{tikzpicture}
\tikzstyle{EdgeStyle}    = [thin,double= black,
                            double distance = 0.5pt]
\useasboundingbox (0,0) rectangle (4cm,4.5cm);
\tikzstyle{VertexStyle} = [shading         = ball,
                           ball color      = white!100!white,
                           minimum size = 20pt,%
                           inner sep       = 1pt,]
\Vertex[style={minimum
size=0.2cm,shape=circle},LabelOut=false,L=\hbox{$v_1$},x=1.5cm,y=3.5cm]{v1}
\Vertex[style={minimum
size=0.2cm,shape=circle},LabelOut=false,L=\hbox{$v_2$},x=0cm,y=2cm]{v2}
\Vertex[style={minimum
size=0.2cm,shape=circle},LabelOut=false,L=\hbox{$v_3$},x=1.5cm,y=0.5cm]{v3}
\Vertex[style={minimum
size=0.2cm,shape=circle},LabelOut=false,L=\hbox{$v_4$},x=3cm,y=2cm]{v4}
\Vertex[style={minimum
size=0.2cm,shape=circle},LabelOut=false,L=\hbox{$v_5$},x=0cm,y=3.5cm]{v5}

\draw
(v1) edge[->,>=angle 90,thick]
node[right]{$\calE_1$} (v2)
(v2) edge[->,>=angle 90,thick]
node[left]{$\calE_2$} (v3)
(v3) edge[->,>=angle 90,thick]
node[left]{$\calE_3$} (v1)
(v1) edge[->,>=angle 90,thick]
node[right]{$\calE_4$} (v4)
(v4) edge[->,>=angle 90,thick]
node[left]{$\calE_5$} (v3)
(v1) edge[->,>=angle 90,thick]
node[above]{$\calE_6$} (v5)
(v5) edge[->,>=angle 90,thick]
node[left]{$\calE_7$} (v2)
(v4) edge[dashed,->,>=angle 90,thick]
node[near end, above]{$d_1$} (4cm, 2cm)
(0cm,4.5cm) edge[dashed,->,>=angle 90,thick]
node[near start,left]{$d_1$} (v5)
(-1cm,3.5cm) edge[dashed,->,>=angle 90,thick]
node[near start,below]{$d_2$} (v5)
(v3) edge[dashed,->,>=angle 90,thick]
node[near end,above]{$d_2$} (2.5cm,0.5cm);
\end{tikzpicture}
\caption{Distribution network \eqref{e:plant} on the graph. The state $x_i$ is the storage at the vertex $v_i$. The flows on the corresponding edges are denoted as $u_i$. The orientations on the edges are consistent with the incidence matrix. The vertices $v_5$ has inflow $d_1$ and $d_2$, the vertices $v_4$ and $v_3$ have outflow $d_1$ and $d_2$, respectively. In this case, $E\in \R^{5\times 2}$ whose first and second column are $[0,0,0,-1,1]^\top$ and $[0,0,-1,0,1]^\top$, respectively.}\label{figure main ex1}
\end{center}
\end{figure}



In this paper, we consider the flows on the edges are proportional to the state differences of the adjacent vertices. More precisely, the flows are given as 
\begin{align}\label{e:controller}
u = W B^\top x,
\end{align}
where the diagonal matrix $W\in\R^{m\times m}$ is the control gain. The output $y\in\R^k$, which measures the state difference at each port, is given as
\begin{align}\label{e:output}
y= & E^Tx.
\end{align}
This form of the output can be due to the physical constraints of the distribution network, i.e., only the state differences at the ports can be measured. Furthermore, for SISO dynamical distribution networks defined on some special graphs, it can be shown that the induced $\calL_2$ gain from $d$ to $y$ in \eqref{e:output} is the largest among all $y =C x$ with $C\in\R^{1\times n}$ and  $C\mathds{1}=0$. See Corollary \ref{cor:complete} in appendix for details.
Now the closed-loop is, 
\begin{equation}\label{flowsys}
\begin{split}
 \dot{x}= & -L_wx+Ed,  \\
 y= & E^Tx,
 \end{split}
\end{equation}
where $L_w$ is the graph Laplacian of $\calG=(\calW,\calV,\calE)$ and $\calW$ is the set of weights specifying by control gain $W$ in \eqref{e:controller}.

We are ready to introduce two problems which we shall tackle in this paper.

\textbf{Optimal Weight Allocation:} For a given graph and a positive constant $c$, 
\begin{align}\label{oriopt}
\min_{W}  &\|\mathbb{G}\|_\infty  \\ \nonumber
s.t.,    & \sum w_i =c, \ w_i \geq 0,
    \end{align}
where $\mathbb{G}$ is the transfer function (from $d$ to $y$) of the system (\ref{flowsys}), $W=~\diag(w_1,...,w_m)$ and $w_i$, for $i=1,...,m$, are the weights on the edges, and $c$ is a positive constant. 


\textbf{Positive semi-definite Laplacian:} Given a weighted graph $\calG$ with both positive and negative edge weights, what are the upper bounds on the magnitudes
of the negative weights in order to have the Laplacian to be positive semi-definite?

%

The following two sections are devoted to these two problems, respectively. Before proceeding, this section is closed with following physical interpretation of distribution networks.

\begin{example} 
One physical interpretation of the system (\ref{flowsys}) is a basic model of a dynamic flow network, where there are water reservoirs on the vertices and pipes on the edges. The reservoirs are identical cylinders and the pipes are horizontal. The state $x$ is constituted by the water levels in the reservoirs and the pressures are proportional to the water levels. The flow in the pipes are passively driven by pressure difference between the reservoirs. The weights $ \mathcal{W}$ are representing the capacities of the pipes, in terms of diameter and friction. The passive flow from reservoir $i$ to reservoir $j$ is then $q_{ij}=~w_{ij}(x_i-x_j)$. The external input $d$ can e.g. be interpreted as flow in pumps which are distributing water inside the network. The output $y$ is then the difference between water levels of the reservoirs which the pumps are pumping to and the reservoirs which the pumps are pumping from.


\end{example}

\medskip 

\section{$\mathcal{H}_\infty$-norm of the distribution network}\label{distnet}

In this section, we shall solve the optimal weight allocation problem by reformulating problem \eqref{oriopt} as an equivalent optimization problem with LMIs as constraints, which can then be efficiently solved numerically using, e.g.,  CVX \cite{cvx}. The main result of this section is presented as follows.

\begin{theorem}\label{thm:main}
Consider the system (\ref{flowsys}), where $\mathcal{G}$ is an undirected graph and each port belongs to exactly one connected component of $\mathcal{G}$. Suppose $L_w \succcurlyeq 0$. Then 
\begin{enumerate}
\item the $\calH_\infty \text{- norm}$ of  (\ref{flowsys}) is finite, and
\item the following statements are eqvivalent:
\begin{itemize}
  \item  the $\calH_\infty \text{- norm}$ is less than or equal to $\gamma$.
 \item the following LMI is satisfied,
\begin{equation}\label{newLMI}
\begin{bmatrix}
L_w & E \\
E^\top & \gamma I_k
\end{bmatrix}\succcurlyeq  0. 
\end{equation}
\end{itemize}
\end{enumerate}
\end{theorem}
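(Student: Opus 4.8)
The plan is to exploit the state-space symmetric structure of \eqref{flowsys}. Writing it in the form \eqref{e:linear-sys} gives $A=-L_w$, $B=E$, $C=E^\top=B^\top$ and $D=0$, so that $A=A^\top$ and $C=B^\top$, exactly the hypotheses under which the $\calH_\infty$-norm is governed by Lemma \ref{lm:bounded-real-ssss}. The one feature that prevents a direct application is that $A=-L_w$ is only negative \emph{semi}-definite: since $L_w\one_c=0$ for every connected-component indicator $\one_c$, the matrix $L_w$ is singular. The key structural fact I will use to deal with this is that the port assumption forces $\im E\subseteq\im L_w$. Indeed, the hypothesis that each port lies in a single connected component means that each column of $E$ sums to zero on every component, i.e. $E^\top\one_c=0$; hence $\ker L_w\subseteq\ker E^\top$, which by symmetry of $L_w$ is equivalent to $\im E\subseteq(\ker L_w)^\perp=\im L_w$.

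First I would prove finiteness (claim (1)) by compressing the dynamics onto $\im L_w$. Let the columns of $V$ be an orthonormal basis of $\im L_w$ and set $\tilde A=-V^\top L_w V$, $\tilde B=V^\top E$, $\tilde C=\tilde B^\top$. Because $\im E\subseteq\im L_w$, the modes in $\ker L_w$ are simultaneously uncontrollable and unobservable and therefore drop out of the transfer matrix, so that $\mathbb{G}(s)=\tilde C(sI-\tilde A)^{-1}\tilde B$ realizes the same input--output map as \eqref{flowsys}. On $\im L_w$ the restriction of $L_w$ is positive definite, so $\tilde A\prec 0$; hence the reduced realization is stable and state-space symmetric, and $\|\mathbb{G}\|_\infty$ is finite (and, by the property of \cite{TAN2001}, attained at $\omega=0$).

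For the equivalence (claim (2)) I would apply Lemma \ref{lm:bounded-real-ssss} to the reduced system and substitute the explicit solution $\tilde P=\gamma I$. With $\tilde A=-V^\top L_w V$ and $\tilde B=V^\top E$ the inequality \eqref{e:Hinf-Riccati} collapses to $-2\gamma V^\top L_w V+2V^\top EE^\top V\preccurlyeq 0$, i.e. $V^\top(\gamma L_w-EE^\top)V\succcurlyeq 0$. A short computation, using once more that $E^\top\one_c=0$ so that $\gamma L_w-EE^\top$ vanishes as a quadratic form on $\ker L_w$ and has no cross terms between $\ker L_w$ and $\im L_w$, shows that this is equivalent to the full-space inequality $\gamma L_w-EE^\top\succcurlyeq 0$. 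Finally, taking the Schur complement of the block $\gamma I_k\succ 0$ in \eqref{newLMI} gives $L_w-\frac{1}{\gamma}EE^\top\succcurlyeq 0$, which is exactly $\gamma L_w-EE^\top\succcurlyeq 0$; this closes the chain $\|\mathbb{G}\|_\infty\leq\gamma\Leftrightarrow\gamma L_w\succcurlyeq EE^\top\Leftrightarrow\eqref{newLMI}$.

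The main obstacle is the singularity of $L_w$: it blocks the naive use of Lemma \ref{lm:bounded-real-ssss} (which assumes $A\prec 0$) and of the ordinary Schur complement (which would need $L_w\succ 0$). Both difficulties are resolved by the same observation, $\im E\subseteq\im L_w$, which lets me first reduce the dynamics onto $\im L_w$ and then lift the resulting LMI back to the full state space without changing its feasibility. An alternative, purely frequency-domain route would instead invoke $\|\mathbb{G}\|_\infty=\bar\sigma(\mathbb{G}(0))$ with $\mathbb{G}(0)=E^\top L_w^\dagger E$ and use the generalized Schur complement, whose validity requires precisely the range condition $\im E\subseteq\im L_w$; this yields the same LMI and serves as a useful consistency check.
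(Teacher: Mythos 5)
Your proposal is correct and follows essentially the same route as the paper: both restrict the dynamics to the orthogonal complement of $\ker L_w$ (where the state-space symmetric system is stable and its $\calH_\infty$-norm equals the DC gain), and both recover \eqref{newLMI} by a Schur complement after lifting the reduced inequality back to the full space. The only cosmetic difference is that you pass through Lemma \ref{lm:bounded-real-ssss} and the inequality $\gamma L_w - EE^\top \succcurlyeq 0$ (precisely the route the paper itself sketches right after the theorem when deriving \eqref{LMI94}), whereas the paper's proof Schur-complements with respect to the $\hat{\Lambda}$ block of the diagonalized system; both share the same implicit assumption that $\ker L_w$ is spanned by the component indicators.
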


\begin{proof}
We show that the theorem is true for the case where $\mathcal{G}$ has exactly one connected component. This is without loss of generality since if $\mathcal{G}$ has more than one connected component, the same procedure can be done for each component and the LMIs can be merged with a common $\gamma$. Denote 
\begin{align}\label{e:U}
U^\top=~[\frac{1}{\sqrt{n}}\mathds{1}_n, u^\top_2, \ldots, u^\top_n] \ \text{and} \ U_2^\top~=~[u^\top_2, \ldots, u^\top_n],
\end{align}
for which 
$U L_w U^\top =~\diag(0, \lambda_2, \ldots,\lambda_n)=:~\Lambda$. Denote $\hat{\Lambda}=~\diag(\lambda_2,\ldots,\lambda_n)$. Then the system (\ref{flowsys}) has equal $\calH_\infty \text{- norm}$ as the system
\begin{align*}
\dot{\tilde{x}} & = - \Lambda \tilde{x} + UE d, \\
z & = E^\top U^\top \tilde{x}.
\end{align*}
Notice that the first row of $UE$ is zero, thus the $\calH_\infty \text{- norm}$ of the system (\ref{flowsys}) equals the $\calH_\infty \text{- norm}$ of the system
\begin{equation}\label{e:flow_P_mini}
\begin{aligned}
\dot{\hat{x}} & = - \hat{\Lambda} \hat{x} + U_2E d, \\
z & = E^\top U_2^\top \hat{x}.
\end{aligned}
\end{equation} 
Due to the symmetry of the system and by Theorem 6 in \cite{TAN2001}, the $\calH_\infty \text{- norm}$ of the system  (\ref{e:flow_P_mini}) is $\|E^\top U_2^\top \hat{\Lambda}^{-1} U_2E \|_{2}$, which is finite. The $\calH_\infty \text{- norm}$ of the system \eqref{flowsys} is then less than or equal to $\gamma$ if and only if
\begin{equation} \nonumber
\|E^\top U_2^\top \hat{\Lambda}^{-1} U_2E \|_{2} \leq \gamma.
\end{equation}
By the property of real symmetric matrix, we can further rewrite the previous constrain as $E^\top U_2^\top \hat{\Lambda}^{-1} U_2E \preccurlyeq \gamma I_{k}$. By Schur complement, we have
\begin{align} \nonumber
\begin{bmatrix}
\hat{\Lambda} & U_2 E \\
E^\top U_2^\top & \gamma I_{k}
\end{bmatrix} \succcurlyeq 0,
\end{align}
which is equivalent to 
\begin{equation} \nonumber
\begin{bmatrix}
\Lambda & UE \\
E^\top U^\top & \gamma I_{k}
\end{bmatrix} \succcurlyeq 0. 
\end{equation}
By pre and post multiplication of matrix $\diag(U^\top, I_k)$ and $\diag(U, I_k)$, respectively, the previous inequality is transformed to
\begin{equation} \nonumber
\begin{bmatrix}
L_w & E \\
E^\top & \gamma I_k
\end{bmatrix} \succcurlyeq 0.
\end{equation}
Then the conclusion follows.
\end{proof}

\begin{remark}
Notice that in Theorem \ref{thm:main}, the weighted Laplacians $L_w$ can have both positive and negative weights. The result still holds as long as $L_w$ is positive semi-definite. 
\end{remark}

\begin{remark}
By  Theorem \ref{thm:main}, the problem (\ref{oriopt}) is equivalent to the following semi-definite programming (SDP) problem
\begin{equation}\label{optnew}
\begin{aligned}
\min_W & \quad \gamma \\
s.t., & \ \ 
\begin{bmatrix}
L_w & E \\
E^\top & \gamma I_k
\end{bmatrix} \succcurlyeq 0, \\
& \ \ \sum w_i = c, \ w_i \geq 0,
\end{aligned}
\end{equation}
which can be efficiently solved by e.g., CVX.

As one numerical example, we consider problem \eqref{optnew} defined on the graph in Fig. \ref{figure main ex1} with $c=8$. Then the optimal weights are $w_1=0, w_2 =1.0427, w_3 = 2, w_4 = 3.0427, w_5 = 0.9573, w_6 = 0.9573,$ and $w_7 = 0$. Here the minimum is $\gamma = 1$. It can be seen that the flows on the first and seventh edge do not contribute to the minimization of $\calH_{\infty} \text{- norm}$ of this network. The mechanism of this weight allocation is under investigation. 
\end{remark}

It is worth mentioning that in a recent work \cite{Pirani2017}, the authors considered a $\calH_\infty$ design problem for system \eqref{flowsys} with grounded Laplacian with respect to the topology, instead of weight allocation.

In Theorem \ref{thm:main}, we proved that the inequality \eqref{newLMI} is satisfied if and only if the $\calH_\infty \text{- norm}$ is less than or equal to $\gamma$. Moreover, by the bounded real lemma for state-space symmetric systems, i.e., Lemma \ref{lm:bounded-real-ssss}, we have that the following two statements are equivalent
\begin{itemize}
\item $\|\mathbb{G}\|_\infty \leq \gamma$,
\item the Riccati inequality 
\begin{equation}\label{flow2feb}
 -PL_w-L_w^TP+EE^T+ \frac{1}{\gamma^2}PEE^TP \preccurlyeq 0.
\end{equation}
 is satisfied with the solution $P=\gamma I$.
\end{itemize}
In this case, \eqref{flow2feb} is simplified as 
\begin{equation}\label{LMI94}
-L_w+\frac{EE^T}{\gamma}\preccurlyeq 0.
\end{equation}

In next section, we shall focus on the positive semi-definiteness of weighted Laplacian, which is a key assumption in Theorem \ref{thm:main}. It turns out that the inequality \eqref{LMI94} plays a crucial role.

\medskip

\section{Positive Semidefiniteness of Signed Laplacians}\label{s:Positive-Laplacian}

In this section, we consider the positive semidefiniteness of signed Laplacian matrices. The main result of this section is formulated in the following theorem, we establish the relation between the magnitude of the negative weights and the effect resistance matrix of subgraph $\calG_+$. In \cite{Zelazo2014}, the authors assumed that for any $(i,j)\in\calE_-$ and $(i',j')\in\calE_-$ being two distinct pairs of vertices, there is no cycle in $\calG_+$ containing $i,j,i'$ and $j'$.  Here we relax the condition to general graphs.

\begin{theorem}\label{th:iff-semip}
The Laplacian matrix $L$ is positive semidefinite if and only if 
\begin{enumerate}
\item for any $e_- = (i,j)\in\calE_-$, $i,j$ belong to one connected component of $\calG_+$, and 
\item the magnitude of the negative weights satisfies
\begin{align}\label{e:iff-semipositive}
\calW_-^{-1} \succcurlyeq B_-^\top L_{w+}^\dagger B_-.
\end{align} 
\end{enumerate}
\end{theorem}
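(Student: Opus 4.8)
The plan is to recast the claim as the positive semidefiniteness of a single augmented block matrix and then read off the two conditions by taking Schur complements in two different ways. Writing $L_{w-} = B_- W_- B_-^\top$ and using that $\calW_- = W_-$ is diagonal with strictly positive diagonal entries (so that $W_-^{-1}\succ 0$ exists), I would introduce
\[
M = \begin{bmatrix} L_{w+} & B_- \\ B_-^\top & W_-^{-1}\end{bmatrix},
\]
and establish the chain $L_w \succcurlyeq 0 \iff M \succcurlyeq 0 \iff (\text{condition 1}) \wedge (\text{condition 2})$, so that the two characterizations of $M\succcurlyeq 0$ directly yield the equivalence asserted in the theorem.

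First I would take the ordinary Schur complement on the lower-right block. Since $W_-^{-1}\succ 0$ is invertible, this step is routine and gives $M \succcurlyeq 0 \iff L_{w+} - B_- W_- B_-^\top \succcurlyeq 0$, which is exactly $L_w \succcurlyeq 0$. This handles one half of the chain with no further work.

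The substantive step is the second Schur complement, taken on the singular upper-left block $L_{w+}$. Here I would invoke the generalized (Moore--Penrose) Schur complement: since $L_{w+} = B_+ W_+ B_+^\top \succcurlyeq 0$ holds automatically,
\[
M \succcurlyeq 0 \iff \big(\im B_- \subseteq \im L_{w+}\big)\ \text{and}\ \big(W_-^{-1} - B_-^\top L_{w+}^\dagger B_- \succcurlyeq 0\big).
\]
The second clause is precisely \eqref{e:iff-semipositive}, i.e. condition 2. It remains to identify the range condition with condition 1, which is the one genuinely graph-theoretic point. I would use $\im L_{w+} = \im B_+ = (\ker B_+^\top)^\perp$ together with the standard fact that $\ker B_+^\top$ is spanned by the indicator vectors $\one_C$ of the connected components $C$ of $\calG_+$. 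Since a column of $B_-$ associated to a negative edge $(i,j)$ is $e_i-e_j$, and $\inn{e_i-e_j}{\one_C} = \mathbf 1[i\in C]-\mathbf 1[j\in C]$ vanishes for every component $C$ if and only if $i$ and $j$ lie in the same component, we conclude that $\im B_- \subseteq \im L_{w+}$ is equivalent to condition 1.

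The hard part is the correct handling of the singular block $L_{w+}$: one must use the pseudo-inverse version of the Schur complement and, crucially, retain the range condition $\im B_- \subseteq \im L_{w+}$, which is exactly what produces condition 1 (dropping it would lose the connectivity hypothesis). As an independent sanity check on the necessity of condition 1, if some negative edge $(i,j)$ has its endpoints in distinct components of $\calG_+$, then the indicator $x=\one_C$ of the component $C$ containing $i$ but not $j$ satisfies $B_+^\top x = 0$, so $x^\top L_w x = -x^\top L_{w-} x$; since the term of $x^\top L_{w-} x$ associated with $(i,j)$ alone is strictly positive, $x^\top L_w x < 0$ and $L_w \not\succcurlyeq 0$. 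Finally, to align with the narrative of the previous section, I would remark that conjugating $M$ by $\diag(I, W_-^{1/2})$ turns it into the LMI \eqref{newLMI} of Theorem \ref{thm:main} with $\gamma = 1$, applied to the positive subgraph $\calG_+$ with external-flow matrix $E = B_- W_-^{1/2}$; the $\calH_\infty$ computation there identifies the norm with $\|W_-^{1/2}B_-^\top L_{w+}^\dagger B_- W_-^{1/2}\|_2$, and the $\gamma=1$ bound is again condition 2, which explains why \eqref{LMI94} is the crucial ingredient.
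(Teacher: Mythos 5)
Your proof is correct, but it takes a genuinely different route from the paper. The paper argues through the dynamical system \eqref{e:sys-semipositive-Laplacian}: condition 1 guarantees (via Theorem \ref{thm:main}) that the auxiliary distribution network on $\calG_+$ with port matrix $B_-\sqrt{\calW_-}$ has finite $\calH_\infty\text{-norm}$ equal to $\|\sqrt{\calW_-}B_-^\top L_{w+}^\dagger B_-\sqrt{\calW_-}\|_2$, and then Lemma \ref{lm:bounded-real-ssss} (via \eqref{LMI94} with $\gamma=1$) identifies $L_w\succcurlyeq 0$ with that norm being at most $1$, which is \eqref{e:iff-semipositive}; the necessity of condition 1 is handled separately by a test-vector computation, and the necessity of condition 2 is then inherited from the equivalences in the sufficiency part. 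You instead work with the single augmented matrix $M$ and take Schur complements in both directions: the ordinary complement on the invertible block $\calW_-^{-1}$ gives $M\succcurlyeq 0\iff L_w\succcurlyeq 0$, while the Albert (generalized) complement on the singular block $L_{w+}$ gives $M\succcurlyeq 0$ iff $\im B_-\subseteq\im L_{w+}$ and \eqref{e:iff-semipositive} hold, and the range condition is exactly condition 1 because $\ker B_+^\top$ is spanned by the component indicators of $\calG_+$. Your version is purely linear-algebraic and self-contained, produces both conditions simultaneously from one criterion rather than treating condition 1 as a separate necessity argument, and makes explicit where the connectivity hypothesis enters (the range condition of the generalized Schur complement); the paper's version buys the thematic link to the $\calH_\infty$ weight-allocation machinery, which your closing remark (conjugating $M$ by $\diag(I,\calW_-^{1/2})$ to recover \eqref{newLMI} with $\gamma=1$) recovers anyway. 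Your test vector $\one_C$ for the necessity of condition 1 is also slightly cleaner than the paper's choice $(\one_{n_1},\tfrac12\one_{n_2},0,\ldots,0)$.
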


\begin{proof}
\emph{Sufficiency:}
Since for any $e_- = (v_i,v_j)\in\calE_-$, $i,j$ belong to one connected component of $\calG_+$, by Theorem \ref{thm:main}, we have the system 
\begin{equation}\label{e:sys-semipositive-Laplacian}
\begin{aligned}
\dot{x} & = -L_{w+} x + B_-\sqrt{\calW_-} d \\
y & = \sqrt{\calW_-} B^\top_- x
\end{aligned}
\end{equation}
has finite $\calH_\infty$ norm. Furthermore, by Lemma \ref{lm:bounded-real-ssss}, we have that $L_w = L_{w+} -B_-\sqrt{\calW_-}\sqrt{\calW_-} B^\top_- \succcurlyeq 0$ is equivalent to the induced $\calL_2$ gain of system  \eqref{e:sys-semipositive-Laplacian} is less than or equal to $1$.  

To prove that $\calW_-^{-1} \succcurlyeq B_-^\top L_{w+}^\dagger B_-$ implies $L_w\succcurlyeq 0$, we only focus on the case that $\calG_+$ is connected, i.e., there is only one connected component, without loss of generality. Since 
\begin{align*}
 \sqrt{\calW_-}B_-^\top L_{w+}^\dagger B_-\sqrt{\calW_-} 
=  \sqrt{\calW_-}B_-^\top U^\top_2 \Lambda_+^{\dagger} U_2  B_-\sqrt{\calW_-},
\end{align*}
where $U_2$ is given as in \eqref{e:U} such that $U_2 L_{w+} U^\top_2 = \Lambda_+$, and the induced $\calL_2$ gain from $d$ to $y$ of system \eqref{e:sys-semipositive-Laplacian} is $\|\sqrt{\calW_-}B_-^\top U^\top_2 \hat{\Lambda}_+^{-1} U_2  B_-\sqrt{\calW_-}\|_2$, we have
\begin{align*}
& \calW_-^{-1} \succcurlyeq B_-^\top L_{w+}^\dagger B_-  \\
\Longleftrightarrow  \quad & \|\sqrt{\calW_-}B_-^\top L_{w+}^\dagger B_-\sqrt{\calW_-} \|_2\leq 1 \\
\Longleftrightarrow  \quad & \|\sqrt{\calW_-}B_-^\top U^\top_2 \hat{\Lambda}_+^{-1} U_2  B_-\sqrt{\calW_-}\|_2 \leq 1.
\end{align*}
Then the conclusion follows.

\emph{Necessity:} First, it can be verified that if there exists an edge $e_- = (v_i,v_j)\in\calE_-$ such that $v_i,v_j$ belong to different connected components of $\calG_+$, $L$ can not be positive semidefinite. More precisely, suppose $\calG_+$ has $N$ connected components, and the vertices set $\calV$ can be divided as $\calV=\calV_1\cup \cdots \cup \calV_N$ with $|\calV_i| = n_i$ and $\sum_{i=1}^{N}n_i = n$. Furthermore, w.l.o.g., suppose $e_-=(v_i,v_j)\in\calE_-$ such that $v_i$ and $v_j$ belongs to the first and second component, respectively. Denote the Laplacian of the graph $(\calV,\calE\setminus\{(i,j)\})$ as $\tilde{L}_{w-}$. Then by choosing $v^\top = (\mathds{1}_{n_1}, \frac{1}{2}\mathds{1}_{n_2}, 0,\ldots,0)^\top$, we have 
\begin{align*}
v^\top L_w v \leq -\frac{1}{4} \calW_{-,ij} <0
\end{align*} 
where $\calW_{-,ij}>0$ is the magnitude of the negative weights of $(v_i,v_j)$, which is contradict to the positive semi-definiteness of $L_w$.

With the item 1) holding, the necessity of \eqref{e:iff-semipositive} follows directly from the sufficiency part of the proof.
\end{proof}

\begin{remark}
Notice that the matrix $B_-^\top L_{w+}^\dagger B_-$ is a submatrix of the effective resistance matrix of $\calG_+$.
When there is only one negative edge, the condition \eqref{e:iff-semipositive} is equivalent to Theorem III.3 in \cite{Zelazo2014}. However, for the multiple negative edges, the result in Theorem \ref{th:iff-semip} is more general than Theorem III.4 in \cite{Zelazo2014} in the sense that there are no constraints on the positions of negative edges. 
\end{remark}

The intuition of Theorem \ref{th:iff-semip} is illustrated in the following example.

\begin{example}\label{ex:equivalence-posi-distri}
Consider a weighted graph with two negative edges given as in Fig. \ref{figure main2}. Suppose the negative weights of $(v_5,v_3)$ and $(v_5,v_4)$ are $-w_8$ and $-w_9$, respectively. Recall that the network in Fig.\ref{figure main ex1} represents a dynamical distribution network with two ports and in/outflows $d_1$ and $d_2$, respectively, and only positive edge weights. By setting 
\begin{align}\label{e:E}
E^\top = \begin{bmatrix}
 0 & 0 & 0 & -\sqrt{w_9} & \sqrt{w_9}\\
 0 & 0 & -\sqrt{w_8} & 0 & \sqrt{w_8}
\end{bmatrix}
\end{align} 
and by \eqref{LMI94}, we have the Laplacian of the graph in Fig. \ref{figure main2} is positive semi-definite, if and only if the dynamical distribution network in Fig. \ref{figure main ex1} with $E$ defined as \eqref{e:E} has $\calH_\infty$-norm no larger than $1$.

As an numerical example, we consider the positive weights of the graph in Fig. \ref{figure main2} are identical to one. In this case the submatrix of the effective resistance matrix
\begin{equation*}
    B_-^\top L_{w+}^\dagger B_-=\begin{bmatrix}
    1.1429  &  0.7143 \\
    0.7143  &  0.9048
\end{bmatrix}.
\end{equation*}
It can be verified that by choosing $w_8 = w_9 = 0.5$, we have that \eqref{e:iff-semipositive} holds. In this case, the eigenvalues of $L_w$ are $0, 0.2, 2.6, 4.2, 5$. However, by choosing $w_8 = 0.7, w_9 = 0.5$, which violates \eqref{e:iff-semipositive}, the eigenvalues of $L_w$ are $-0.04, 0, 2.4, 4.2, 5$.

\end{example}

\begin{figure}[ht]
\centering
\begin{tikzpicture}
\tikzstyle{EdgeStyle}    = [thin,double= red,
                            double distance = 0.0pt]
\useasboundingbox (0,0) rectangle (4cm,4cm);
\tikzstyle{VertexStyle} = [shading         = ball,
                           ball color      = white!100!white,
                           minimum size = 20pt,%
                           inner sep       = 1pt,]
\Vertex[style={minimum
size=0.2cm,shape=circle},LabelOut=false,L=\hbox{$v_1$},x=1.5cm,y=3.5cm]{v1}
\Vertex[style={minimum
size=0.2cm,shape=circle},LabelOut=false,L=\hbox{$v_2$},x=0cm,y=2cm]{v2}
\Vertex[style={minimum
size=0.2cm,shape=circle},LabelOut=false,L=\hbox{$v_3$},x=1.5cm,y=0.5cm]{v3}
\Vertex[style={minimum
size=0.2cm,shape=circle},LabelOut=false,L=\hbox{$v_4$},x=3cm,y=2cm]{v4}
\Vertex[style={minimum
size=0.2cm,shape=circle},LabelOut=false,L=\hbox{$v_5$},x=0cm,y=3.5cm]{v5}

\draw
(v5) edge[dashed, -,>=angle 90,thick, red]
node[right]{} (v4)
(v2) edge[-,>=angle 90,thick]
node[left]{} (v3)
(v3) edge[dashed,-,>=angle 90,thick, red]
node[left]{} (v5)
(v3) edge[-,>=angle 90,thick]
node[left]{} (v1)
(v2) edge[-,>=angle 90,thick]
node[left]{} (v1)
(v1) edge[-,>=angle 90,thick]
node[right]{} (v4)
(v4) edge[-,>=angle 90,thick]
node[left]{} (v3)
(v1) edge[-,>=angle 90,thick]
node[above]{} (v5)
(v5) edge[-,>=angle 90,thick]
node[left]{} (v2);
\end{tikzpicture}
\caption{ The network used in Example \ref{ex:equivalence-posi-distri} where the graph has two negative edges $(v_1,v_2)$ and $(v_1,v_3)$ (red colored).}\label{figure main2}
\end{figure}
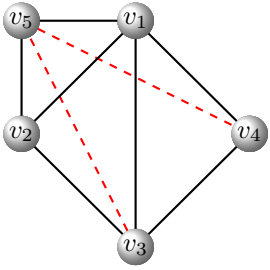

\section{Conclusions}\label{Concl}

For a basic dynamic distribution networks, we have derived an optimization set up with LMIs as constraints, which minimizes the $\calH_\infty \text{- norm}$ with respect to the allocation of the weights on the edges. Furthermore, by using a bounded real lemma for state-space symmetric systems, we have interpreted the Riccati inequality for distribution networks as a definiteness criterion of a Laplacian to a graph containing both positive and negative weights on the edges. Moreover, we have provided a sufficient and necessary condition, using effective resistance matrix of the subgraph spanned the positive edges, for the positive semi-definiteness of the Laplacian with both positive and negative edges. 

A related future topic is the problem of minimizing the $\calH_\infty \text{- norm}$ of dynamic flow networks with respect to topology, more precisely, a limited amount of edges is to be allocated in a graph with fixed vertices. Another future topic is to consider a fixed graph (both topology and weights), but consider saturation of the flow on the edges. The problem is then to minimize the induced $\calL_2$-gain with respect to allocation of the saturation limits.

\section{Acknowledgment}

The first author would like to acknowledge Dr. Mohammad Pirani for the valuable discussions.

\section{Appendix}

\begin{proof}[Proof of Lemma \ref{lm:bounded-real-ssss}]
Notice that $\textit{(2)}$ implies $\textit{(1)}$ is guaranteed by the bounded real lemma. Hence we only show that $\textit{(1)}$ implies $\textit{(2)}$. 

By using Theorem 6 in \cite{TAN2001}, we have that $\|\mathbb{G}\|_\infty = \|-BA^{-1}B\|_2$. Hence, by Schur complement, $\|\mathbb{G}\|_\infty \leq \gamma$ implies 
\begin{align*}
\begin{bmatrix}
-A & B \\
B^\top & \gamma I
\end{bmatrix} \succcurlyeq 0,
\end{align*}
which is equivalent to $A+\frac{1}{\gamma}BB^\top \preccurlyeq 0$ and $\gamma\geq 0$. Then it is straightforward to see that $P=\gamma I$ is a solution to \eqref{e:Hinf-Riccati}.
\end{proof}

\begin{corollary}\label{cor:complete}
Consider a SISO dynamical distribution network
\begin{equation}\label{e:system-general-output}
\begin{aligned}
\dot{x} & = - L_w x + Ed, \\
y & = C x,
\end{aligned} 
\end{equation}
defined on a connected graph, with $E\in\R^{n\times 1}$ and $C\in\R^{1\times n}$ satisfying $\mathds{1}^\top E = C\mathds{1}=0$, then the $H_\infty-$norm is upper bounded by 
\begin{align}
\frac{\|C\|_2\|B\|_2}{\lambda_2}.
\end{align} 
Furthermore, suppose that the eigenvalues of $L_w$ satisfy $0=\lambda_1<\lambda_2=\cdots=\lambda_n$, then we have
\begin{align*}
E^\top = \arg \max_{C} & \quad \|\mathbb{G}\|_\infty, \\
s.t.  & \quad \|C\|_2 = \|E\|_2.
\end{align*}
\end{corollary}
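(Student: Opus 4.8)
The plan is to diagonalize $L_w$ and use the two orthogonality conditions $C\mathds{1}=0$, $\mathds{1}^\top E=0$ to pass to a stable minimal realization on the orthogonal complement of $\mathds{1}$. With $U$, $U_2$ as in \eqref{e:U} I write $L_w=U^\top\Lambda U$, $\Lambda=\diag(0,\lambda_2,\ldots,\lambda_n)$, and set $\tilde C:=CU_2^\top$ and $\tilde E:=U_2E$, so that
\[
\mathbb{G}(s)=C(sI+L_w)^{-1}E=\tilde C\,(sI_{n-1}+\hat\Lambda)^{-1}\tilde E .
\]
The point is that the conditions $C\mathds{1}=0$ and $\mathds{1}^\top E=0$ make the mode of the zero eigenvalue both uncontrollable and unobservable, so it cancels from $\mathbb{G}$; the reduced realization has $-\hat\Lambda\prec0$ and is stable, which legitimizes $\|\mathbb{G}\|_\infty=\sup_{\omega}\bar\sigma(\mathbb{G}(j\omega))$. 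Using $U_2^\top U_2=I-\tfrac1n\mathds{1}\mathds{1}^\top$ together with $C\mathds{1}=0$ and $\mathds{1}^\top E=0$, I also record $\|\tilde C\|_2=\|C\|_2$ and $\|\tilde E\|_2=\|E\|_2$.

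For the bound I apply Cauchy--Schwarz to the scalar $\mathbb{G}(j\omega)$:
\[
|\mathbb{G}(j\omega)|\leq\|\tilde C\|_2\,\|(j\omega I+\hat\Lambda)^{-1}\|_2\,\|\tilde E\|_2 .
\]
Since $\hat\Lambda\succ0$ is diagonal, $\|(j\omega I+\hat\Lambda)^{-1}\|_2=\max_{i\geq2}(\omega^2+\lambda_i^2)^{-1/2}\leq1/\lambda_2$ uniformly in $\omega$ (the maximum is attained at $\omega=0$, $i=2$). Taking the supremum over $\omega$ gives $\|\mathbb{G}\|_\infty\leq\|C\|_2\|E\|_2/\lambda_2$, i.e.\ the asserted bound with the input matrix $E$ in the role of $B$.

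For the extremal claim, in the regime $0=\lambda_1<\lambda_2=\cdots=\lambda_n=:\lambda$ we have $\hat\Lambda=\lambda I_{n-1}$, so the resolvent reduces to a scalar factor and
\[
\mathbb{G}(j\omega)=\frac{\tilde C\tilde E}{j\omega+\lambda}=\frac{CE}{j\omega+\lambda},
\]
the last equality because $\tilde C\tilde E=C\big(I-\tfrac1n\mathds{1}\mathds{1}^\top\big)E=CE$ by $C\mathds{1}=0$. Hence $\|\mathbb{G}\|_\infty=|CE|/\lambda$, and maximizing $\|\mathbb{G}\|_\infty$ subject to $\|C\|_2=\|E\|_2$ amounts to maximizing $|CE|$. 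By Cauchy--Schwarz $|CE|\leq\|C\|_2\|E\|_2=\|E\|_2^2$ with equality iff $C^\top$ is parallel to $E$; with the norm constraint this forces $C=\pm E^\top$, and $C=E^\top$ satisfies the standing constraint $C\mathds{1}=(\mathds{1}^\top E)^\top=0$, giving $E^\top=\arg\max_C\|\mathbb{G}\|_\infty$.

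I expect the only delicate point to be the justification of the transfer-function identity in the presence of the marginally stable mode at $\lambda_1=0$: one must verify carefully that $C\mathds{1}=\mathds{1}^\top E=0$ remove this mode from $\mathbb{G}$, so that the stable-system formula $\|\mathbb{G}\|_\infty=\sup_\omega\bar\sigma(\mathbb{G}(j\omega))$ applies to the reduced system. The remaining steps are Cauchy--Schwarz and the monotonicity of $(\omega^2+\lambda_i^2)^{-1/2}$ in $\lambda_i$ and $\omega$.
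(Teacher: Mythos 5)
Your proof is correct and follows essentially the same route as the paper's: diagonalize $L_w$ via $U_2$, reduce to the stable $(n-1)$-dimensional subsystem, bound the scalar transfer function by $\|C\|_2\|E\|_2/\lambda_2$, and use the Cauchy--Schwarz equality condition together with $U_2^\top U_2=I-\tfrac1n\mathds{1}\mathds{1}^\top$ for the extremal claim. The only cosmetic difference is that you bound $|\tilde C(j\omega I+\hat\Lambda)^{-1}\tilde E|$ via the operator norm of the resolvent in one step, while the paper first expands it as a sum over modes and applies the triangle inequality termwise; you also correctly flag that the stated bound's $\|B\|_2$ should read $\|E\|_2$.
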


\begin{proof}
The notations in this proof are consistent with the ones in the proof of Theorem \ref{thm:main}. 

The $H_\infty$ norm of system \eqref{e:system-general-output} is 
\begin{align*}
& \sup_{\omega\in\R} |C U_2^\top (j\omega I + \hat{\Lambda})^{-1} U_2E| \\
= & \sup_{\omega\in\R} | \sum_{i=1}^{n-1} \frac{\hat{C}_i \hat{E}_i}{j\omega+\lambda_{i+1}} | \\
\leq & \sup_{\omega\in\R} \sum_{i=1}^{n} \frac{|\hat{C}_i \hat{E}_i|}{\sqrt{\omega^2+\lambda^2_{i+1}}} \\
\leq &  \frac{\|C\|_2 \|E\|_2}{\lambda_2}
\end{align*}
where $\hat{C}_i$ and $\hat{E}_i$ are the $i$th components of the vectors $CU_2^\top $ and $U_2E$, respectively, and the last inequality is based on the fact that $\|CU_2^\top\|_2 = \|C\|_2$ and $\|U_2 E\|_2 = \|E\|_2$.

If we further have $\lambda_2=\cdots=\lambda_n$, i.e., the previous upper bound can be achieved if and only if $U_2 C^\top = U_2 E$. Then since $U_2^\top U_2 = I - \frac{1}{n}\mathds{1}\mathds{1}^\top$ and $\mathds{1}^\top E = C\mathds{1}=0$, we have $C= E^\top$. Thus the conclusion follows.
\end{proof}




\bibliographystyle{plain} 
\bibliography{ref}


\end{document}